\newtheorem{theorem}{Theorem}
\newtheorem{lemma}{Lemma}
\newtheorem{definition}{Definition}[section]
\theoremstyle{definition}
\newtheorem{example}{Example}
\newcommand{\beql}[1]{\begin{equation}\label{#1}}
\newcommand{\eeq}{\end{equation}}
\newcommand{\comment}[1]{}
\newcommand{\Abs}[1]{{\left|{#1}\right|}}
\newcommand{\Set}[1]{{\left\{{#1}\right\}}}
\newcommand{\RR}{{\mathbb R}}
\newcommand{\CC}{{\mathbb C}}
\newcommand{\ZZ}{{\mathbb Z}}
\newcommand{\NN}{{\mathbb N}}
\newcommand{\TT}{{\mathbb T}}
\newcommand{\QQ}{{\mathbb Q}}
\newcommand{\one}{{\bf 1}}
\newcommand{\supp}{{\rm supp\,}}
\newcommand{\ft}[1]{\widehat{#1}}
\newcounter{rem}
\newcounter{step}
\newcounter{mysec}
\newcounter{mysubsec}[mysec]
\newcounter{othm}
\def\theothm{\Alph{othm}}
\begin{document}

\title{The structure of multiplicative tilings of the real line}
\author{Mihail N. Kolountzakis and Yang Wang}
\address{M.K.: Department of Mathematics and Applied Mathematics, University of Crete, Voutes Campus, GR-700 13, Heraklion, Crete, Greece}
\email{kolount@gmail.com}
\address{Y.W.: Department of Mathematics, Hong Kong Univ.\ of Science and Technology, Kowloon, Hong Kong}
\email{yangwang@ust.hk}
\thanks{
The first author has been partially supported by the Hong Kong Univ.\ of Science and Technology. The second author is supported in part by the Hong Kong Research Grant Council grant 16306415 and 16317416 and by University of Crete grant No 4725. 
}

\maketitle
\tableofcontents

\begin{abstract}
Suppose $\Omega, A \subseteq \RR\setminus\Set{0}$ are two sets, both of mixed sign, that $\Omega$ is Lebesgue measurable
and $A$ is a discrete set. We study the problem of when $A \cdot \Omega$ is a (multiplicative) tiling of the real line,
that is when almost every real number can be uniquely written as a product $a\cdot \omega$, with $a \in A$, $\omega \in \Omega$.
We study both the structure of the set of multiples $A$ and the structure of the tile $\Omega$. We prove strong results in both cases.
These results are somewhat analogous to the known results about the structure of translational tiling of the real line.
There is, however, an extra layer of complexity due to the presence of sign in the sets $A$ and $\Omega$,
which makes multiplicative tiling roughly equivalent to translational tiling on the larger group $\ZZ_2 \times \RR$.

\end{abstract}

%%%%%%%%%%%%%%%%%%%%%%%%%%%%%%%%%%%%%%%%%%%%%%%%%%%%%%%%%%%%%%%%%%%%%%%%%%%%%%%%%
%%%%%%%%%%%%%%%%%%%%%%%%%%%%%%%%%%%%%%%%%%%%%%%%%%%%%%%%%%%%%%%%%%%%%%%%%%%%%%%%%
%%%%%%%%%%%%%%%%%%%%%%%%%%%%%%%%%%%%%%%%%%%%%%%%%%%%%%%%%%%%%%%%%%%%%%%%%%%%%%%%%
\section{Introduction}
\label{sec:intro}

Tilings have long fascinated mathematicians \cite{grunbaum1986tilings}.
The case where one moves a single object by translation in an abelian group (translational tiling)
has proved both challenging and full of connections to Functional Analysis \cite{kolountzakis2004milano}, 
such as connections to the so-called Fuglede Conjecture or Spectral Set Conjecture \cite{fuglede1974operators,tao2004fuglede,kolountzakis2006tiles}.
Simultaneous tiling of a set by both translation and multiplication (with linear operators on the space where tiling takes place)
has also been studied mainly because of its connections to wavelets
\cite{wang2002wavelets,speegle2003dilation,olafsson2004wavelets,dobrescu2006wavelet,ionascu2006simultaneous}.

\begin{definition}[Translational tiling, multiplicative tiling]\label{def:tiling}\ \\
Suppose $f:\RR^d\to\CC$ is measurable and $A \subseteq \RR^d$. We say that $f+A$ is a tiling
of $\RR^d$ at level $\ell$ if
$$
\sum_{a \in A} f(x-a) = \ell
$$
for almost all $x \in \RR^d$ with the sum converging absolutely.

If $\Omega \subseteq \RR^d$ is a measurable set and $f = \one_\Omega$ we also say that $\Omega+A$ is a tiling.

If $A \subseteq \RR\setminus\Set{0}$ and
$$
\sum_{a \in A} f(a^{-1} x) = \ell
$$
for almost all $x \in \RR^d$, with absolute convergence, then we say that $A\cdot f$ is a tiling.

If $\Omega \subseteq \RR^d$ is a measurable set and $f = \one_\Omega$ we also say that $A\cdot \Omega$ is a tiling.

\end{definition}

While translational tiling or more generally tiling using congruent tiles has been studied extensively, one particular tiling, namely {\em multiplicative tiling}, has not. Such tiling arise rather ubiquitously in the study wavelets and wavelet sets. In the standard setting, a function $f(x) \in L^2(\RR)$ is a {\em wavelet} if $\{2^{j/2}f(2^jx-k): ~j,k\in\ZZ\}$ form an orthonormal basis for $L^2(\RR)$. A set $\Omega\subset \RR$ is a {\em wavelet set} if $\ft\chi_\Omega$ is a wavelet. It was first shown by Dai and Larson \cite{dai1998wandering} that $\Omega$ is a wavelet set if and only if it tiles $\RR$ translationally by $\ZZ$ and multiplicatively by the set $\{2^j:~j\in\ZZ\}$, see also \cite{dai1997wavelet,speegle2003dilation}. The more general multiplicative tiling, which we aim to study here, was first introduced in Wang \cite{wang2002wavelets} to study a more general form of wavelet sets.

Our purpose here is to study the structure of multiplicative tilings.
We are guided in this by previous work on the structure of translational tilings of the real line or of the integer line.
In \cites{newman1977tesselations,leptin1991uniform,lagarias1996tiling,kolountzakis1996structure}
it is proved, under very broad conditions, that any translational tiling of the real line (or of the integer line) must be periodic.
The main tool in the study of translational tilings by a single tile has long been (see e.g.\ \cite{kolountzakis2004milano}) Fourier Analysis,
where the problem is expressed roughly as a support condition of the Fourier Transform of the set of translates
on the zero set of the Fourier Transform of the tile, an approach that will also be used extensively in this paper.

Suppose then that $A \subseteq \RR\setminus\Set{0}$ is a discrete set and $\Omega \subseteq \RR$ is a measurable set.
We want to derive properties of $\Omega$ and $A$ from the assumption of multiplicative tiling $A\cdot \Omega = \RR$ (multiplicative tiling at level 1).

\noindent{\bf The importance of sign.}

It is important to emphasize that if $A$ or $\Omega$ are nonnegative (or of one sign, more generally) then the problem
quickly reduces to translational tiling.
Indeed, suppose that $\Omega \subseteq (0,+\infty)$.
Then, writing $A = A^+ \cup (-A^-)$, with $A^\pm \subseteq (0,+\infty)$, we see that
the tiling condition $A\cdot\Omega = \RR$ is equivalent to the two tiling conditions
$$
\RR^+ = A^+\cdot \Omega\ \ \ \mbox{and}\ \ \ \RR^+ = A^-\cdot\Omega.
$$
Taking logarithms of both we reduce $A\cdot\Omega=\RR$ to the two independent translational tilings
$$
\RR = \log\Omega + \log A^+\ \ \ \mbox{and}\ \ \ \RR = \log\Omega + \log A^-.
$$
So if one can understand translational tiling by the set $\log\Omega$ then all results transfer back to our multiplicative
tiling $A\cdot\Omega = \RR$ if $\Omega$ is of one sign.

Similarly, if $A \subseteq (0, +\infty)$ then, writing again $\Omega = \Omega^+ \cup (-\Omega^-)$, with $\Omega^\pm \subseteq (0, +\infty)$,
we obtain that $A\cdot \Omega = \RR$ is equivalent to the two translational tilings
$$
\RR = \log\Omega^+ + \log A \ \ \ \mbox{and}\ \ \ \RR = \log\Omega^- + \log A.
$$

If however the two sets $A$ and $\Omega$ have both a positive and a negative part the multiplicative tiling $A\cdot \Omega = \RR$
becomes a lot more complicated. It still reduces to tiling by translation but not of the ordinary kind with one set being
translated around to fill space.
Indeed, when $A = A^+ \cup (-A^-)$ and $\Omega = \Omega^+ \cup (-\Omega^-)$ then the multiplicative tiling $A \cdot \Omega = \RR$
reduces to the two simultaneous tilings
$$
\RR^+ = A^+\Omega^+ \cup A^-\Omega^-\ \ \ \mbox{and}\ \ \ \RR^+ = A^-\Omega^+ \cup A^+\Omega^-,
$$
and, after taking logarithms, to the sumultaneous translational tiling
\begin{align}
\RR &= (\log\Omega^++\log A^+) \cup (\log\Omega^-+\log A^-)\ \ \ \mbox{and}\ \ \ \label{cti}\\
\RR &= (\log\Omega^++\log A^-) \cup (\log\Omega^-+\log A^+).\nonumber
\end{align}
The meaning of the notation here should be obvious. For instance, the meaning of the first equation in \eqref{cti} is that
almost every point in $\RR$ can be written, uniquely, either in the form $\log\omega+\log a$, with $\omega\in\Omega^+, a \in A^+$,
or in the form $\log\omega+\log a$, with $\omega \in \Omega^-, a \in A^-$.
Put differently, the translates of the set $\log\Omega^+$ by the numbers in $\log A^+$ together with the translates of the set $\log\Omega^-$
by the numbers in $\log A^-$ cover almost all of $\RR$ exactly once and any two of these sets intersect at a set of measure zero.

The purpose of this paper is first, to exploit \eqref{cti} in order to derive structural properties of the
set $\Omega$ (the tile) and the set $A$ (the set of multiples) and, second, to study \eqref{cti} (which we call {\em cross-tiling})
in itself, and in the case of a finite cyclic group, where things are simpler. In particular, we want to make some
connections and distinctions to ordinary translational tiling where only one set is translated.

The structure of the paper is as follows.
In \S\ref{sec:rationality} we restrict ourselves to translational tilings and generalize some periodicity and rationality results
(from \cite{kolountzakis1996structure,lagarias1996tiling}) to the extent that they become useful to us in the analysis of \S\ref{sec:structure}
and \S\ref{sec:tile-structure}
where structure results are proved for the logarithms of the sets $A$ and $\Omega$ respectively.
In \S\ref{sec:ct} the problem of cross tiling is studied in cyclic groups (we show in \S\ref{sec:structure} that multiplicative tiling of $\RR$
reduces to cross tiling in cyclic groups), and we provide examples of cross tilings which differ significantly from ordinary translational
tilings as well as a Fourier condition for cross tiling, analogous to the one for ordinary translational tilings.

%%%%%%%%%%%%%%%%%%%%%%%%%%%%%%%%%%%%%%%%%%%%%%%%%%%%%%%%%%%%%%%%%%%%%%%%%%%%%%%%%
%%%%%%%%%%%%%%%%%%%%%%%%%%%%%%%%%%%%%%%%%%%%%%%%%%%%%%%%%%%%%%%%%%%%%%%%%%%%%%%%%
%%%%%%%%%%%%%%%%%%%%%%%%%%%%%%%%%%%%%%%%%%%%%%%%%%%%%%%%%%%%%%%%%%%%%%%%%%%%%%%%%
\section{The structure of multiple translational tiling by a set}
\label{sec:rationality}

%%%%%%%%%%%%%%%%%%%%%%%%%%%%%%%%%%%%%%%%%%%%%%%%%%%%%%%%%%%%%%%%%%%%%%%%%%%%%%%%%
\begin{lemma}\label{lm:poly-zeros}
Suppose $\Lambda$ is a finite subset of the torus $\TT=\RR/\ZZ$ and
\beql{expoly}
f(n) = \sum_{\lambda\in\Lambda} c_\lambda e^{2\pi i \lambda n},\ \ \ (n\in\ZZ)
\eeq
is an exponential polynomial on the integers ($c_\lambda \in \CC$).
Suppose that
$$
\Lambda = \Lambda_1 \cup \cdots \cup \Lambda_r,\ \ \ (r\ge 1)
$$
is the decomposition of $\Lambda$ into rational equivalence classes (two points in $\Lambda$ have rational difference
if and only if they belong to the same $\Lambda_j$).
Write also $f_j(n) = \sum_{\lambda\in\Lambda_j} c_\lambda e^{2\pi i \lambda n}$ so that $f=f_1+\cdots+f_r$.

Then the zeros of $f$ are the common zeros of the $f_j$ plus a finite set (possibly empty).
\end{lemma}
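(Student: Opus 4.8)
The plan is to reduce the statement to a finiteness theorem for non-degenerate exponential sums, in the spirit of Skolem--Mahler--Lech. First I would clear denominators inside each rational class. Since $\Lambda$ is finite, there is a positive integer $Q$ such that $(\lambda-\mu)Q\in\ZZ$ whenever $\lambda,\mu$ lie in the same class $\Lambda_j$; fixing a base point $\lambda_j^0\in\Lambda_j$ this gives $e^{2\pi i\lambda Q}=e^{2\pi i\lambda_j^0 Q}=:\rho_j$ for every $\lambda\in\Lambda_j$. Splitting $n\in\ZZ$ into residues $n=Qm+s$ with $0\le s<Q$, a direct computation then yields
$$
f_j(Qm+s)=f_j(s)\,\rho_j^{\,m},\qquad f(Qm+s)=\sum_{j=1}^r f_j(s)\,\rho_j^{\,m}.
$$
The crucial structural point is that, because the $\Lambda_j$ are exactly the rational-equivalence classes, the numbers $\rho_1,\dots,\rho_r$ are pairwise distinct and no ratio $\rho_i/\rho_j$ ($i\ne j$) is a root of unity: indeed $(\rho_i/\rho_j)^N=1$ would force $\lambda_i^0-\lambda_j^0\in\QQ$, contradicting that $i\ne j$.

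For each fixed residue $s$ set $u_s(m):=f(Qm+s)=\sum_j f_j(s)\,\rho_j^{\,m}$, a generalized power sum whose surviving frequencies retain the non-degeneracy property above. The key step, and the only real obstacle, is to show that such a non-degenerate power sum is either identically zero or has only finitely many integer zeros. This is precisely the non-degenerate case of the Skolem--Mahler--Lech theorem, which holds over any field of characteristic zero (so the possible transcendence of the $\rho_j$ is harmless); I would either cite it or, exploiting the unimodular setting here, prove finiteness directly by induction on $r$, the cases $r\le 2$ being elementary since $\rho_i/\rho_j$ not being a root of unity means its powers are pairwise distinct. Applying this to the one-sided sequences $\big(u_s(m)\big)_{m\ge 0}$ and $\big(u_s(-m)\big)_{m\ge 0}$ (the latter being non-degenerate with roots $\rho_j^{-1}$) covers all $m\in\ZZ$.

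It remains to assemble the cases over the finitely many residues $s$. If some $f_j(s)\ne 0$, then $u_s\not\equiv 0$, so $f$ has only finitely many zeros in the residue class of $s$; moreover that same $f_j$ satisfies $f_j(Qm+s)=f_j(s)\rho_j^{\,m}\ne 0$ throughout the class, so none of these zeros is a common zero of the $f_j$. If instead $f_j(s)=0$ for every $j$, then $u_s\equiv 0$ and every $n\equiv s$ is a zero of every $f_j$, hence a common zero. Both implications rest on the linear independence of the sequences $m\mapsto\rho_j^{\,m}$ (distinct $\rho_j$), which is exactly what makes $u_s\equiv 0$ equivalent to $f_j(s)=0$ for all $j$. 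Consequently the zeros of $f$ split into the common zeros of the $f_j$ (coming from the residues on which all $f_j$ vanish) together with a finite exceptional set (coming from the remaining residues), which is the assertion.
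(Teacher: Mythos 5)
Your proof is correct, and while it rests on the same two pillars as the paper's argument (the Skolem--Mahler--Lech theorem and the identity $f(Qm+s)=\sum_j f_j(s)\rho_j^m$, which is literally the paper's Vandermonde computation with $Q=bN$ and $s=a$), it deploys them in a genuinely different way. The paper applies SML globally: once to $f$ and once to the auxiliary exponential polynomial $\Abs{f_1}^2+\cdots+\Abs{f_r}^2$ (a trick that realizes the common zero set $\bigcap_j Z(f_j)$ as the zero set of a single exponential polynomial), concluding that the exceptional set $X=Z(f)\setminus\bigcap_j Z(f_j)$ is periodic up to a finite set; it then destroys the putative periodic part by locating an arithmetic progression of length $r+1$ with step divisible by $N$ inside $X$ and solving one Vandermonde system. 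You instead partition $\ZZ$ into residue classes mod $Q$ and exploit the observation that on each class every $f_j$ is a pure geometric sequence $f_j(s)\rho_j^m$ with $\Abs{\rho_j}=1$, so that each $Z(f_j)$ is exactly a union of classes and the dichotomy per class (all common zeros, versus finitely many zeros of $f$ and no common zeros) is transparent. The price of your cleaner structural picture is that you need the full non-degenerate case of SML on each class as a black box, whereas the paper needs only the cruder ``periodic plus finite'' form of SML together with a finite linear-algebra step; conversely, you avoid the $\sum_j\Abs{f_j}^2$ device entirely.

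One caution: your fallback offer to prove the non-degenerate case ``directly by induction on $r$'' in the unimodular setting is not credible for $r\ge 3$. Ruling out infinitely many \emph{exact} zeros of $\sum_j x_j\rho_j^m$ is precisely the hard content of SML (equidistribution-type arguments only show the zeros have density zero), and the known proofs go through $p$-adic methods; unimodularity does not help, and the $\rho_j$ here may be transcendental. Stick with the citation, which is perfectly legitimate: the non-degenerate case follows from the general theorem in two lines (an infinite zero set would contain an infinite progression $a+b\ZZ_{\ge 0}$; restricting to it gives $\sum_j (x_j\rho_j^a)(\rho_j^b)^d=0$ for all $d\ge 0$ with the $\rho_j^b$ pairwise distinct by non-degeneracy, so Vandermonde forces all $x_j=0$). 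With that reference, or with this two-line derivation added, your proof is complete.
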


\begin{proof}
Write $Z(\phi)$ for the zero set of a function $\phi$ on its domain.

Define the set of integers
$$
X = Z(f) \setminus \bigcap_{j=1}^r Z(f_j).
$$
We have to show that $X$ is finite.
By the Skolem-Mahler-Lech Theorem \cite{lech1953note} the integer zero set of every exponential polynomial, such as \eqref{expoly},
is a periodic set plus a finite set (possibly empty). Since $\Abs{f_1}^2+\cdots+\Abs{f_r}^2$ is also an exponential
polynomial it follows that both $Z(f)$ and $\bigcap_{j=1}^r Z(f_j)$ are periodic sets plus a finite set.
Therefore $X$ is also a periodic set, give or take a finite set.

It suffices therefore to prove that $X$ does not contain arithmetic progressions of arbitrary length, as then
it follows that $X$ has no periodic part and is just a finite set.

For $j=1,2,\ldots,r$
write $\Lambda_j = a_j + Q_j$, where $Q_j \subseteq \QQ$ is a finite set and $a_i-a_j \notin \QQ$ for $i \neq j$.
Let $N \in \NN$ be the least common multiple of all denominators in all the $Q_j$, so that $Nq \in \ZZ$ for all $q \in \bigcup_{j=1}^r Q_j$.
If $X$ contains arbitrarily long arithmetic progressions then it contains a progression of the form
\beql{assumption1}
a+bNk,\ \ \ (k=0, 1, 2,\ldots, r)
\eeq
for some $a, b \in \ZZ$, $b>0$.

For each $k=1,2,\ldots,r$ we then have
\begin{align}
f(a+bNk) &= \sum_{j=1}^r \sum_{q\in Q_j} c_{a_j+q} e^{2\pi i (a_j+q) (a+bNk)} \nonumber \\
 &= \sum_{j=1}^r z_j^k x_j \label{vandermonde}
\end{align}
with
\begin{align*}
z_j &= e^{2\pi i a_j b N} \\
x_j &= e^{2\pi i a_j a} \sum_{q \in Q_j} c_{a_j+q} e^{2\pi i qa} = f_j(a).
\end{align*}
All numbers $z_j$ are different since the differences of the $a_j$'s are irrational, so the Vandermonde linear system
$$
\sum_{j=1}^r z_j^k x_j = 0,\ \ \ (j=1,2,\ldots,r)
$$
which we obtain if we assume that $a+bNk \in X$, for $k=1,2,\ldots,r$, has only the all-zero solution $x_1=\cdots=x_r=0$, which implies
that $f_1(a)=\cdots=f_r(a)=0$, hence $a$ is a common zero of all $f_j$, hence not in $X$, a contradiction with \eqref{assumption1} for $k=0$.
So $X$ does not contain arbitrarily long arithmetic progressions and is, therefore, a finite set.
\end{proof}

%%%%%%%%%%%%%%%%%%%%%%%%%%%%%%%%%%%%%%%%%%%%%%%%%%%%%%%%%%%%%%%%%%%%%%%%%%%%%%%%%
\begin{lemma}\label{lm:torus-tiling}
Suppose $\Lambda$ is a finite subset of the torus $\TT=\RR/\ZZ$ and
$$
\Lambda = \Lambda_1 \cup \cdots \cup \Lambda_r,
$$
is the decomposition of $\Lambda$ into rational equivalence classes.
Suppose also that $F \in L^1(\TT)$ and $c_\lambda \in \CC$ are such that
\beql{torus-tiling}
\sum_{\lambda\in\Lambda} c_\lambda F(x-\lambda) = \text{const.\ \ for almost all $x \in \TT$}.
\eeq
If the function $F$ takes only countably many values then for each $j=1,2,\ldots,r$ we also have
$$
\sum_{\lambda \in \Lambda_j} c_\lambda F(x-\lambda) = \text{const.${}_j$\ \ for almost all $x \in \TT$}.
$$
\end{lemma}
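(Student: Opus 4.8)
The plan is to pass to the Fourier side on the torus and reduce the statement to the zero-set dichotomy already furnished by Lemma~\ref{lm:poly-zeros}, invoking the countable-values hypothesis only at the very last step to eliminate a finite exceptional set. Throughout, write $Z(g)$ for the set of integer zeros of a function $g$ on $\ZZ$.

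First I would introduce the exponential polynomials $f(n)=\sum_{\lambda\in\Lambda}c_\lambda e^{2\pi i\lambda n}$ and $f_j(n)=\sum_{\lambda\in\Lambda_j}c_\lambda e^{2\pi i\lambda n}$, so that $f=f_1+\cdots+f_r$ with exactly the rational-class decomposition required by Lemma~\ref{lm:poly-zeros}. Setting $G(x)=\sum_{\lambda\in\Lambda}c_\lambda F(x-\lambda)$ and $G_j(x)=\sum_{\lambda\in\Lambda_j}c_\lambda F(x-\lambda)$, a direct computation of Fourier coefficients on $\TT$ gives $\widehat{G}(n)=\widehat{F}(n)f(-n)$ and $\widehat{G_j}(n)=\widehat{F}(n)f_j(-n)$. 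Since $G,G_j\in L^1(\TT)$, the hypothesis \eqref{torus-tiling} that $G$ is a.e.\ constant is equivalent to $\widehat{G}(n)=0$ for every $n\neq 0$, i.e.\ $\widehat{F}(n)f(-n)=0$; hence whenever $\widehat{F}(n)\neq 0$ and $n\neq0$ the integer $-n$ lies in $Z(f)$.

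Next I would feed $f$ into Lemma~\ref{lm:poly-zeros}, which yields $Z(f)\subseteq\bigcap_{j}Z(f_j)\cup E$ with $E$ finite. Fix $j$ and $n\neq 0$. If $\widehat{F}(n)=0$ then $\widehat{G_j}(n)=0$ trivially; if $\widehat{F}(n)\neq 0$ then $-n\in Z(f)$, and unless $-n$ lies in the finite set $E$ we get $-n\in Z(f_j)$, so again $\widehat{G_j}(n)=0$. Consequently $\widehat{G_j}$ is supported on the finite set $\{0\}\cup(-E)$, and therefore $G_j$ coincides almost everywhere with a trigonometric polynomial $P_j$.

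The main obstacle is precisely this finite exceptional set $E$: a priori $P_j$ could be genuinely nonconstant, and eliminating that possibility is exactly what the countable-values hypothesis is for. Since $F$ takes only countably many values and $G_j$ is a finite $\CC$-linear combination of translates of $F$, the function $G_j$ — hence $P_j$ — takes only countably many values on a full-measure set $\TT'\subseteq\TT$. But $P_j$ is a trigonometric polynomial, equivalently a Laurent polynomial in $w=e^{2\pi i x}$; if it were nonconstant then for every value $v$ the equation $P_j(x)=v$ would be a nontrivial polynomial equation in $w$, so each level set $\{P_j=v\}$ would be finite. Then $\{x:P_j(x)\in P_j(\TT')\}$ would be a countable union of finite sets, hence null, yet it contains the full-measure set $\TT'$ — a contradiction. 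Therefore $P_j$ is constant, i.e.\ $G_j=\mathrm{const}_j$ almost everywhere, which is the assertion. I expect the only genuinely delicate point to be the bookkeeping that turns the finite support of $\widehat{G_j}$ into the trigonometric-polynomial/analyticity argument; everything else is the routine translation into Fourier coefficients and a citation of Lemma~\ref{lm:poly-zeros}.
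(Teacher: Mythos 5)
Your proof is correct and takes essentially the same route as the paper's: both pass to Fourier coefficients, invoke Lemma~\ref{lm:poly-zeros} to show $\widehat{G_j}$ is supported on a finite set, conclude that $G_j$ agrees a.e.\ with a trigonometric polynomial, and then use the countable-values hypothesis to force that polynomial to be constant. The only difference is that you spell out the last step explicitly (a nonconstant trigonometric polynomial is a nonconstant Laurent polynomial in $e^{2\pi i x}$, so its level sets are finite and its countable range on a full-measure set gives a contradiction), whereas the paper asserts this without detail.
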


\begin{proof}
Our assumption \eqref{torus-tiling} is equivalent to
$$
\ft{F}(n) = 0\ \ \text{or}\ \ \sum_{\lambda\in\Lambda}c_\lambda e^{2\pi i \lambda n} = 0,\ \ \ (n \neq 0).
$$
In other words we must have
$$
Z(\ft{F}) \cup Z\left(\sum_{\lambda\in\Lambda}c_\lambda e^{2\pi i \lambda n}\right) \supseteq \ZZ\setminus\Set{0}.
$$
But, from Lemma \ref{lm:poly-zeros},
$$
Z\left(\sum_{\lambda\in\Lambda}c_\lambda e^{2\pi i \lambda n}\right) \setminus \bigcap_{j=1}^r\ Z\left(\sum_{\lambda\in\Lambda_j}c_\lambda e^{2\pi i \lambda n}\right)
$$
is a finite set.
This implies that
$Z(\ft{F}) \cup Z(\sum_{\lambda\in\Lambda_j}c_\lambda e^{2\pi i \lambda n})$ contains all but finitely many integers, for each $j=1,2,\ldots,r$.
Consequently the function
\beql{torus-tiling-j}
\sum_{\lambda \in \Lambda_j} c_\lambda F(x-\lambda)
\eeq
is a trigonometric polynomial of $x$. But, as $F$ takes only countably many values and this is a finite sum, the function
in \eqref{torus-tiling-j} has a countable range too, and this can only happen
if the function is a constant, which is exactly what we wanted to prove.
\end{proof}

%%%%%%%%%%%%%%%%%%%%%%%%%%%%%%%%%%%%%%%%%%%%%%%%%%%%%%%%%%%%%%%%%%%%%%%%%%%%%%%%%
\begin{lemma}\label{lm:integer-tiling}
Suppose that $V \subseteq \ZZ\setminus\Set{0}$ is a finite set of non-zero integers, $A \subseteq \RR$ is a
discrete set of bounded density and $v_t \in V$, for $t \in A$, are such that
\beql{int-tiling}
\sum_{t \in A} v_t f(x-t) = k,\ \ \ \mbox{for almost all $x \in \RR$},
\eeq
where $f \in L^1(\RR)$ is an integer-valued function of compact support and $k$ is an integer.
Then the measure
$$
\mu = \sum_{t \in A} v_t \delta_t
$$
is a periodic measure (and, therefore, $A$ is a periodic set).
\end{lemma}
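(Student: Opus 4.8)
The plan is to pass to the Fourier side, locate the spectrum of $\mu$ on the (locally finite) zero set of an entire function, and then fold the line onto a circle so that Lemmas \ref{lm:torus-tiling} and \ref{lm:poly-zeros} can be applied.

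First I would record that $\mu=\sum_{t\in A}v_t\delta_t$ is translation-bounded: since $A$ has bounded density and $|v_t|\le\max_{v\in V}|v|$, the mass of $\mu$ on any unit interval is uniformly bounded, so $\mu$ is a tempered distribution and the tiling identity \eqref{int-tiling} reads $\mu*f=k$ (the constant function). Taking Fourier transforms gives $\ft{\mu}\cdot\ft{f}=k\,\delta_0$ as tempered distributions. Because $f\in L^1(\RR)$ has compact support, $\ft f$ is the restriction to $\RR$ of an entire function, so its real zero set $Z(\ft f)$ is locally finite, and on the open set where $\ft f\ne 0$ the smooth factor $1/\ft f$ shows $\ft\mu=0$. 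Hence $\supp\ft\mu\subseteq\{0\}\cup Z(\ft f)$ is locally finite. Together with translation-boundedness of $\mu$, this forces $\ft\mu$ to be a pure point measure with uniformly bounded masses; equivalently $\mu$ is almost periodic with frequency set $\Delta=\{\,s:\ft\mu(\{s\})\ne 0\,\}\subseteq\{0\}\cup Z(\ft f)$.

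The decisive step is to show that $\Delta$ is commensurable, i.e.\ contained in a single lattice $\tfrac1T\ZZ$. Granting this, every $s\in\Delta$ satisfies $e^{2\pi i sT}=1$, so $\ft\mu$ is invariant under the modulation removing these phases, which transported back says exactly that $\mu$ is invariant under translation by $T$, proving periodicity of $\mu$ and of $A$. To obtain commensurability I would argue by contradiction and fold the line onto the circle. Assuming two frequencies lie in an irrational ratio, after rescaling I reduce the relevant (bounded-density, hence locally finite) part of the spectrum modulo $1$ to a finite set $\Lambda\subseteq\TT$ meeting at least two distinct rational-equivalence classes, and I periodize $f$ by $F(x)=\sum_{m\in\ZZ}f(x+m)$. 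This sum is finite because $\supp f$ is compact, and it is integer-valued because $f$ is, so $F$ takes only countably many values; this is the one place where the hypothesis that $f$ is integer-valued is used, and it is precisely what Lemma \ref{lm:torus-tiling} requires. The folded identity becomes $\sum_{\lambda\in\Lambda}c_\lambda F(x-\lambda)=\text{const}$ on $\TT$, so Lemma \ref{lm:torus-tiling} splits it into one identity per rational class, while Lemma \ref{lm:poly-zeros} shows that the integer zero sets of the associated exponential polynomials differ from the common zero set only by a finite set. These two descriptions are incompatible with a nontrivial irrational split, yielding the contradiction and pinning $\Delta\subseteq\tfrac1T\ZZ$.

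I expect the main obstacle to be exactly this commensurability step: upgrading ``the frequencies lie on the zeros of one entire function and form a set of bounded density'' to ``the frequencies lie in one arithmetic lattice.'' The two structural inputs that make it work are the translation-boundedness of $\mu$ (coming from the bounded density of $A$ and the finiteness of $V$), which rules out any continuous or polynomially growing spectral part, and the integer-valuedness of $f$, which supplies the countable-range hypothesis of Lemma \ref{lm:torus-tiling} after folding; dropping either would allow almost periodic but non-periodic solutions. Once commensurability is in hand, the folding argument shows that $A$ is contained in finitely many cosets of $T\ZZ$ and that the weights $v_t$ repeat with period $T$, so that $\mu$ is a periodic measure, as claimed.
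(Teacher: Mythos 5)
Your first paragraph is sound up to the support condition $\supp\ft{\mu}\subseteq\Set{0}\cup Z(\ft{f})$, which is also the paper's starting point (quoted from Theorem 3.1 of \cite{kolountzakis1996structure}). The gap opens immediately after: the claim that this support condition together with translation boundedness ``forces $\ft{\mu}$ to be a pure point measure \ldots\ equivalently $\mu$ is almost periodic'' is precisely where the depth of the lemma lies, and you give no argument for it. A tempered distribution supported on a locally finite set can a priori involve derivatives of point masses, and even if $\ft{\mu}$ is granted to be a pure point measure, nothing yet says that $A$ decomposes into finitely many arithmetic progressions. The paper obtains this structure not by soft Fourier reasoning but from a theorem of Meyer, ultimately resting on Cohen's idempotent theorem, applied through Theorem 5.1 of \cite{kolountzakis1996structure} to each level set $A^v=\Set{t\in A:\ v_t=v}$: each $A^v$ equals a finite union of arithmetic progressions up to a finite symmetric difference. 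Your proposal contains no ingredient playing this role, and without it the passage from ``spectrum in a locally finite set'' to any lattice-like structure is unsupported.

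The second, independent, gap is the commensurability step. You want to fold modulo $1$ and invoke Lemmas \ref{lm:torus-tiling} and \ref{lm:poly-zeros}, but both require a \emph{finite} set $\Lambda\subseteq\TT$ of translates, and you have no finiteness: $Z(\ft{f})$ is an infinite locally finite set whose image mod $1$ can be infinite (even dense), and the measure $\mu$ folds to finitely many point masses on $\TT$ only if $\mu$ is already supported on finitely many cosets of a lattice --- that is, only if you already have essentially the periodicity you are trying to prove. This is why the paper uses Lemma \ref{lm:torus-tiling} only in Theorem \ref{th:integer-tiling}\eqref{en:split}, \emph{after} the present lemma has established periodicity; your plan runs the logic in the wrong order. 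The paper's own commensurability argument is different and is where integrality enters: having $\mu=\sum_{k=1}^K\delta_{\zeta_k\ZZ}*\tau_k$ with pairwise irrational ratios of the $\zeta_k$ (the leftover finite part being killed because $\ft{\mu}$ has discrete support), the tiling splits into subtilings $f*(\delta_{\zeta_k\ZZ}*\tau_k)=C_k$ with integer constants $C_k$, and integrating over one period gives $C_k\zeta_k=\int f\cdot\tau_k([0,\zeta_k))$, so every $\zeta_k$ is a rational multiple of $\int f$; this contradicts the irrationality of the ratios unless everything merges into a single progression. Your intended use of integer-valuedness (through the countable-range hypothesis of Lemma \ref{lm:torus-tiling}) cannot be activated without the unavailable finiteness, so the proposal as written does not prove the lemma.
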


\begin{proof}
It follows from the proof of \cites[Theorem 3.1]{kolountzakis1996structure} that
\beql{mu-ft}
\supp\ft{\mu} \subseteq \Set{0} \cup \Set{\xi: \ft{f}(\xi)=0},
\eeq
with the right-hand side of the above equation being a discrete set (since $f$ has compact support $\ft{f}$ is analytic).
For $v \in V$ write $A^v = \Set{t \in A: v_t = v}$.
It follows from the proof of \cites[Theorem 5.1]{kolountzakis1996structure}
that each $A^v$ has the form
\beql{av-qp-1}
A^v = F^v \triangle \bigcup_{j=1}^{J^v} \left( \alpha_j^v \ZZ + \beta_j^v \right),
\eeq
for some $J^v \in \NN$, $\alpha_j^v > 0, \beta_j^v \in \RR$ and finite set $F^v \subseteq \RR$.
(Let us only indicate that, as in \cites{kolountzakis1996structure}, the main ingredient
in the proof of \eqref{av-qp-1} is a theorem of Meyer \cites[Theorem 4.2]{kolountzakis1996structure}, \cites{meyer1970nombres},
itself a consequence of the Idempotent Theorem of P.J. Cohen \cites[Theorem 4.1]{kolountzakis1996structure}, \cites{cohen1959homomorphisms}.)

By merging together the $a_j^v$ which are commensurable we can rewrite \eqref{av-qp-1} as
\beql{av-qp}
\delta_{A^v} = \sum_{i=1}^{I^v} \delta_{\gamma_i^v\ZZ}*\nu_i^v + \nu^v,
\eeq
where all the $\gamma_i^v$ have irrational ratios and the measures $\nu_i^v$ and $\nu^v$
are each a finite sum of integer point masses on $\RR$.

Using \eqref{av-qp} we can now write
$$
\mu = \sum_{v\in V} \delta_{A^v} = \sum_{v \in V} v \sum_{i=1}^{I^v} \delta_{\gamma_i^v\ZZ}*\nu_i^v + \sum_{v \in V} v \nu^v.
$$
Taking Fourier Transforms above we observe that the first summand on the right contributes a discrete measure to $\ft{\mu}$
(by the Poisson Summation Formula) and the second summand contributes a trigonometric polynomial. But since, by \eqref{mu-ft},
the Fourier Transform of $\mu$ must have a discrete support it follows that the second summand is 0 and we have
\beql{mu-decomp}
\mu = \sum_{k=1}^K \delta_{\zeta_k \ZZ}*\tau_k,
\eeq
where (having, again, merged the arithmetic progressions with commensurable periods) the ratio of any two $\zeta_k$ is irrational
and the $\tau_k$ are finite sums of integer point masses on $\RR$.
Taking Fourier Transforms we get by the Poisson Summation Formula that
$$
\ft{\mu}(\xi) = \sum_{k=1}^K \ft{\tau_k}(\xi) \zeta_k^{-1} \delta_{\zeta_k^{-1}\ZZ},
$$
and observe that the measures $\ft{\tau_k}(\xi) \zeta_k^{-1} \delta_{\zeta_k^{-1}\ZZ}$ have disjoint supports except at $0$.
Using our assumption \eqref{int-tiling} that $f*\mu = k$ we obtain the tilings
$$
f*(\delta_{\zeta_k \ZZ}*\tau_k) = C_k,
$$
where $C_k$ is also an integer constant.
Integrating this over one period $[0,\zeta_k)$ we obtain
$$
C_k \zeta_k = \int f \cdot \tau_k([0,\zeta_k)).
$$
This shows that all $\zeta_k$ are rational multiples of $\int f$
so all summands in \eqref{mu-decomp} can be merged to one
$$
\mu = \delta_{\zeta\ZZ}*\tau,
$$
where $\tau$ is, again, a finite sum of integer point masses, hence $\mu$ is a periodic measure with period $\zeta$, as
we had to prove.
\end{proof}

%%%%%%%%%%%%%%%%%%%%%%%%%%%%%%%%%%%%%%%%%%%%%%%%%%%%%%%%%%%%%%%%%%%%%%%%%%%%%%%%%
%%%%%%%%%%%%%%%%%%%%%%%%%%%%%%%%%%%%%%%%%%%%%%%%%%%%%%%%%%%%%%%%%%%%%%%%%%%%%%%%%
\begin{theorem}\label{th:integer-tiling}
Suppose that $V \subseteq \ZZ$ is a finite set of non-zero integers, $A \subseteq \RR$ is a
discrete set of bounded density and $v_t \in V$, for $t \in A$, are such that
\beql{integer-tiling}
\sum_{t \in A} v_t f(x-t) = k,\ \ \ \mbox{for almost all $x \in \RR$},
\eeq
where $f \in L^1(\RR)$ is an integer-valued function of compact support and $k$ is an integer.
Then
\begin{enumerate}
\item[(i)]\label{en:periodic}
The measure $\mu = \sum_{t\in A} v_t \delta_t$ is periodic and can be written in the form
$$
\mu = \delta_{\zeta\ZZ}*\tau,
$$
where $\zeta>0$ and $\tau$ is a finite sum of integer point masses
$$
\tau = \sum_{s=1}^S c_s \delta_{x_s},
$$
where $c_s \in \ZZ$, $x_s \in [0,\zeta)$.
\item[(ii)]\label{en:split}
Write $X = \Set{x_1, x_2, \ldots, x_S}$ and
$$
X = X_1 \cup \cdots \cup X_r
$$
for the partition of $X$ into equivalence classes mod $\zeta\QQ$.
Then for $j=1,2,\ldots,r$ and with $\tau_j = \sum_{x \in X_j} c_x \delta_x$ we have
the tilings
$$
f*\delta_{\zeta\ZZ}*\tau_j = k_j,
$$
for some integers $k_j$, $j=1,2,\ldots,r$.
\end{enumerate}
\end{theorem}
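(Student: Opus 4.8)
The plan is to obtain part (i) almost immediately from Lemma \ref{lm:integer-tiling} and to reduce part (ii) to an application of Lemma \ref{lm:torus-tiling} after descending to a torus.

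For part (i), note that the hypotheses here are exactly those of Lemma \ref{lm:integer-tiling}, so that lemma already gives that $\mu = \sum_{t\in A} v_t \delta_t$ is periodic, say with period $\zeta>0$, and has the form $\mu = \delta_{\zeta\ZZ}*\tau$ with $\tau$ a finite sum of integer point masses on $\RR$. Since convolving with $\delta_{\zeta\ZZ}$ is insensitive to translating any atom of $\tau$ by an integer multiple of $\zeta$, I would fold $\tau$ into the fundamental domain $[0,\zeta)$: replace each atom $c\,\delta_x$ by $c\,\delta_{x \bmod \zeta}$ and merge atoms whose positions coincide, adding their (integer) coefficients. This produces $\tau = \sum_{s=1}^S c_s \delta_{x_s}$ with $c_s\in\ZZ$ and $x_s\in[0,\zeta)$, exactly as required.

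For part (ii), the key observation is that the tiling $f*\mu = k$, being $\zeta$-periodic, descends to a tiling on the circle $\RR/\zeta\ZZ$. I would periodize $f$ by setting $g(x) = \sum_{n\in\ZZ} f(x-n\zeta)$; because $f$ is integer-valued with compact support, this sum is finite at each point, so $g$ is a well-defined integer-valued function on $\RR/\zeta\ZZ$ and in particular has countable range. The tiling condition \eqref{integer-tiling} then rewrites as $\sum_{s=1}^S c_s\, g(x-x_s) = k$ for almost all $x$. Rescaling to the standard torus via $y=x/\zeta$ and putting $F(y)=g(\zeta y)$, $\lambda_s = x_s/\zeta$, $c_{\lambda_s}=c_s$, I obtain $\sum_s c_{\lambda_s} F(y-\lambda_s) = k$ for a.e.\ $y\in\TT$, with $F$ of countable range. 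Under this rescaling the partition of $X$ into classes mod $\zeta\QQ$ corresponds precisely to the decomposition of $\Lambda = \{\lambda_s\}$ into rational equivalence classes on $\TT$, since $x-x' \in \zeta\QQ$ iff $\lambda-\lambda' \in \QQ$.

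Now Lemma \ref{lm:torus-tiling} applies verbatim: for each class $\Lambda_j$ one gets $\sum_{\lambda\in\Lambda_j} c_\lambda F(y-\lambda) = \mathrm{const}_j$ for a.e.\ $y$. Undoing the rescaling and the periodization turns this into $g*\tau_j = \mathrm{const}_j$, that is $f*\delta_{\zeta\ZZ}*\tau_j = \mathrm{const}_j$. Finally, since $g$ is integer-valued and the $c_x$ are integers, the left-hand side is integer-valued, so each $\mathrm{const}_j$ is an integer $k_j$, completing part (ii) (and one may note $\sum_j k_j = k$ as a consistency check). The only genuine content beyond bookkeeping lives inside Lemma \ref{lm:torus-tiling} (and, through it, the Skolem--Mahler--Lech input of Lemma \ref{lm:poly-zeros}), so I expect the main obstacle to be purely the careful setup of the descent to the torus: verifying that periodizing an integer-valued compactly supported $f$ yields a function of countable range, so that Lemma \ref{lm:torus-tiling} is genuinely applicable, and checking that the mod-$\zeta\QQ$ partition of the positions matches the rational-equivalence partition on $\TT$.
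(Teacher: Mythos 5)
Your proposal is correct and follows essentially the same route as the paper: part (i) is read off from Lemma \ref{lm:integer-tiling} (with the folding of atoms into $[0,\zeta)$ a harmless bookkeeping step), and part (ii) is obtained by periodizing $f$ over $\zeta\ZZ$, descending to the torus, and applying Lemma \ref{lm:torus-tiling} to the integer-valued (hence countably-valued) periodization, with the mod-$\zeta\QQ$ classes matching the rational equivalence classes on $\TT$. The only cosmetic difference is that the paper rescales to $\zeta=1$ at the outset whereas you rescale after periodizing; your explicit justification that the constants $k_j$ are integers is a small point the paper leaves implicit.
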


Theorem \ref{th:integer-tiling} was proved in \cites{lagarias1996tiling} for
$f$ being the indicator function of a bounded, measurable subset of $\RR$,
and with $v_t = 1$ for all $t \in A$.
In this case the number $r$ of classes in \eqref{en:split} is 1, and the tiling set $A$ is therefore rational, i.e.\ the differences of its
elements are rational multiples of the period.
The proof does not readily extend to the more general case of Theorem \ref{th:integer-tiling} and this is what we show here.

\begin{proof}
Part \eqref{en:periodic} of the Theorem is merely a restatement of Lemma \ref{lm:integer-tiling}.

Notice that we can assume from now on that $\zeta$ (the period of the tiling) is 1, as we can dilate the axis.

Define the $\ZZ$-periodization of $f$
$$
F(x) = \sum_{n \in \ZZ} f(x-n) = f*\delta_\ZZ(x),
$$
which is in $L^1(\TT)$,
and observe that the tiling $f*(\delta_\ZZ*\tau)(x)=k$ is equivalent to the tiling of $\TT$
$$
F*\tau(x)=k,\ \ \ \mbox{for almost all $x \in \TT$}.
$$
Since $F$ is also integer-valued Lemma \ref{lm:torus-tiling} applies and we conclude that
$$
F*\tau_j(x)=k_j,\ \ \ \mbox{for almost all $x \in \TT$ and some integer $k_j$},
$$
which is equivalent to $f*(\delta_\ZZ*\tau_j)=k_j$ as we had to prove.
This concludes the proof of \eqref{en:split}.
\end{proof}

%%%%%%%%%%%%%%%%%%%%%%%%%%%%%%%%%%%%%%%%%%%%%%%%%%%%%%%%%%%%%%%%%%%%%%%%%%%%%%%%%
%%%%%%%%%%%%%%%%%%%%%%%%%%%%%%%%%%%%%%%%%%%%%%%%%%%%%%%%%%%%%%%%%%%%%%%%%%%%%%%%%
%%%%%%%%%%%%%%%%%%%%%%%%%%%%%%%%%%%%%%%%%%%%%%%%%%%%%%%%%%%%%%%%%%%%%%%%%%%%%%%%%
\section{The structure of the set of multiples of a multiplicative tile}
\label{sec:structure}

\begin{theorem}\label{th:mult-structure}
(Structure of the set of multiples)\\
Suppose $\Omega \subseteq \RR$ is a bounded measurable set such that $\Omega \cap (-\epsilon, \epsilon) = \emptyset$ for some $\epsilon>0$.
Suppose also $A \subseteq \RR\setminus\Set{0}$ is a discrete set such that
$$
A \cdot \Omega
$$
is a (multiplicative) tiling of $\RR$ at level 1.

Let $\Omega^+, \Omega^-, A^+, A^- \subseteq \RR^+$ be the positive and negative parts of $\Omega$ and $A$
$$
\Omega^+ = \Omega \cap (0, +\infty),\ \Omega^- = -(\Omega \cap (-\infty, 0)),\ 
A^+ = A \cap (0, +\infty),\ A^- = -(A \cap (-\infty, 0)).
$$
Then
\begin{enumerate}
\item[(i)]\label{symmetric-case} If $\Omega$ is \underline{essentially symmetric} (i.e.\ if $\Abs{\Omega^+ \, \triangle\, \Omega^-} = 0$) then
$A^+ \cap A^- = \emptyset$ and the union $\log A^+ \cup \log A^-$ is periodic of the form
\beql{periodic-rational}
\alpha\ZZ+\Set{r_1, r_2, \ldots, r_s}\ \ \ \text{with $r_i-r_j$ rational multiples of $\alpha>0$}.
\eeq
The partition of the set $\log A^+ \cup \log A^-$ into its component sets $\log A^+$ and $\log A^-$
can be completely arbitrary.
\item[(ii)]\label{nonsymmetric-case} If $\Omega$ is \underline{not essentially symmetric} with respect to the origin then 
the sets $\log A^+$ and $\log A^-$ are both periodic and of the form \eqref{periodic-rational} with the same period $\alpha$.
\end{enumerate}
\end{theorem}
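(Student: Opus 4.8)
The plan is to pass to logarithms and reduce everything to the cross-tiling system \eqref{cti}. Writing $B^\pm=\log\Omega^\pm$ and $C^\pm=\log A^\pm$, and introducing the counting measures $\mu_\pm=\sum_{c\in C^\pm}\delta_c$, the hypothesis $A\cdot\Omega=\RR$ becomes the pair of translational identities $\one_{B^+}*\mu_+ + \one_{B^-}*\mu_-=1$ and $\one_{B^+}*\mu_- + \one_{B^-}*\mu_+=1$ almost everywhere. Because $\Omega$ is bounded and bounded away from $0$, the tiles $\one_{B^\pm}$ are indicator functions of \emph{bounded} sets, hence integer-valued and compactly supported, and the level-$1$ condition forces $C^+$ and $C^-$ to be discrete of bounded density. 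This is exactly the setting in which Theorem \ref{th:integer-tiling} applies, once I manufacture from the two equations genuine tilings with integer-valued tiles and integer weights.

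For the symmetric case \eqref{symmetric-case}, where $\one_{B^+}=\one_{B^-}=:\one_B$ a.e., both equations collapse to the single identity $\one_B*(\mu_++\mu_-)=1$. Since $\one_B\ge 0$ and $\Abs{B}>0$, any point of $C^+\cap C^-$ would be counted with multiplicity $\ge2$ on a set of positive measure, incompatible with level $1$; hence $C^+\cap C^-=\varnothing$ and $\mu_++\mu_-$ is the counting measure of the \emph{set} $C:=C^+\cup C^-$. Thus $\one_B*\delta_C=1$ is an ordinary translational tiling by a single bounded indicator, and Theorem \ref{th:integer-tiling} in the Lagarias--Wang case $v_t\equiv1$, where the number $r$ of rational classes is $1$, yields that $C$ is periodic of the form \eqref{periodic-rational}. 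Finally, because the collapsed equation is symmetric under interchanging $\mu_+$ and $\mu_-$ and does not remember the labels of the atoms of $\delta_C$, any partition $C=C^+\sqcup C^-$ reproduces a valid pair \eqref{cti}; this gives the arbitrariness of the split.

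For the non-symmetric case \eqref{nonsymmetric-case}, I add and subtract the two equations. With $g=\one_{B^+}+\one_{B^-}$, $h=\one_{B^+}-\one_{B^-}$, $\mu=\mu_++\mu_-$ and $\nu=\mu_+-\mu_-$ this gives $g*\mu=2$ and $h*\nu=0$. Both $g$ and $h$ are integer-valued of compact support, $h\not\equiv0$ precisely because $\Omega$ is not essentially symmetric, and the weight sets are $\Set{1}$ and $\Set{-1,0,1}$; so Theorem \ref{th:integer-tiling}\eqref{en:periodic} applies to each and shows $\mu$ is periodic with some period $\zeta$ and $\nu$ periodic with some period $\zeta'$. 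To force a \emph{common} period I use that $\mu$ takes values in $\Set{0,1,2}$ and $\nu$ in $\Set{-1,0,1}$, with $\supp\nu=\Set{x:\mu(x)=1}$. This set is $\zeta$-periodic, being read off from $\mu$, and $\zeta'$-periodic, being $\supp\nu$; if $\zeta/\zeta'$ were irrational it would be invariant under the dense group $\zeta\ZZ+\zeta'\ZZ$ and hence dense, contradicting the bounded density of $C^+\cup C^-$. Thus either $\nu\equiv0$, so $C^+=C^-$ and we are back to a single-tile tiling by $g$, or $\zeta/\zeta'\in\QQ$; in the latter case a common period $\alpha$ exists and $\mu_\pm=\tfrac12(\mu\pm\nu)$ are both $\alpha$-periodic, giving periodicity of $C^+$ and $C^-$ with one period.

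It remains to upgrade periodicity to the rational form \eqref{periodic-rational}, that is, to show $C^+$ and $C^-$ each lie in a single coset of $\alpha\QQ$. Here I invoke Theorem \ref{th:integer-tiling}\eqref{en:split} for $g*\mu=2$ and for $h*\nu=0$: after rescaling the common period to $1$, I decompose the fundamental-domain atoms modulo $\QQ$ into rational classes and, recombining the class-wise identities for $g$ and for $h$, recover a genuine cross-tiling \eqref{cti} supported on each individual class. The main obstacle is to rule out more than one such class, which is the analogue, in this two-tile setting, of the Lagarias--Wang phenomenon that a \emph{fixed} bounded tile can tile only at one rational scale. On the Fourier side the two equations force the integer-frequency coefficients of $\mu_\pm$ to be supported on $Z(\ft{g})\cup Z(\ft{h})$, a discrete set since $g,h$ have compact support, and Lemma \ref{lm:poly-zeros} lets me read the rational-class structure off these exponential polynomials; showing that no two rationally inequivalent classes can simultaneously satisfy both equations with the same non-symmetric $B^\pm$ is the step I expect to be the crux.
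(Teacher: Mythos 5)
Your reduction, periodicity, and commensurability steps are sound and essentially the same as the paper's: take logarithms, add and subtract the two tilings to get $(\omega^++\omega^-)*(a^++a^-)=2$ and $(\omega^+-\omega^-)*(a^+-a^-)=0$, and apply Theorem \ref{th:integer-tiling}\eqref{en:periodic} to each. Your way of forcing a common period — observing that $\supp(\mu_+-\mu_-)$ equals the set where $\mu_++\mu_-$ has mass exactly $1$, so it is invariant under both periods and would be dense (contradicting discreteness) if the two periods were incommensurable — is a correct and slightly cleaner variant of the paper's argument that a periodic discrete set cannot contain two arithmetic progressions with incommensurable steps. The symmetric case is also handled as in the paper.

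The genuine gap is the rationality step, which you explicitly leave open (``the step I expect to be the crux''). This is precisely where the paper does its remaining work, and the closing argument is not the Fourier argument you anticipate; it is an elementary positivity/minimality argument. Concretely: suppose the atoms $x_1,\dots,x_J$ in a period split into more than one rational class, fix one class, and let $a'^\pm$ be the restrictions of $a^\pm$ to $\ZZ+\Set{x_{i_1},\dots,x_{i_r}}$. Theorem \ref{th:integer-tiling}\eqref{en:split} applied to the sum and difference tilings gives $(\omega^++\omega^-)*(a'^++a'^-)=k_1$ and $(\omega^+-\omega^-)*(a'^+-a'^-)=k_2$ for integers $k_1,k_2$. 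Any nonempty family of translates of a set of positive measure that sums a.e.\ to a constant must have that constant $\ge 1$; since both the chosen class and its complement are nonempty, the level-$2$ equation forces $k_1=1$. Adding and subtracting the two class-wise identities and dividing by $2$ yields
$$
a'^+*\omega^+ + a'^-*\omega^- = \tfrac{1+k_2}{2}, \qquad a'^-*\omega^+ + a'^+*\omega^- = \tfrac{1-k_2}{2},
$$
which are proper, nonempty subtilings of the original level-$1$ tilings \eqref{cti}. But level-$1$ tilings by translates of sets are minimal: the subtiling and its nonempty complement would each need level $\ge 1$ while their levels sum to $1$ — a contradiction, so there is only one rational class. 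Your proposed Fourier route (supports of $\ft{\mu_\pm}$ in $Z(\ft{g})\cup Z(\ft{h})$ plus Lemma \ref{lm:poly-zeros}) is essentially what is already packaged inside Theorem \ref{th:integer-tiling}\eqref{en:split}: it produces the class-wise identities but cannot by itself exclude multiple classes; that exclusion needs the positivity argument above, which is the idea missing from your proposal.
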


\begin{proof}
In order to transfer the problem to the translational case, which is much better understood, it is natural to take logarithms.
Allowing ourselves a slight abuse of terminology,
we then have that $A\cdot \Omega=\RR$ is a tiling if and only if
$$
\RR^+ = A^+ \Omega^+ \cup A^- \Omega^-,\ \ \ \RR^+ = A^- \Omega^+ \cup A^+ \Omega^-
$$
are both tilings (where $\RR^+$ is the right half-line).

Taking logarithms of both we obtain the additive (translational) tilings
\beql{log-tilings}
\RR = (a^+ + \omega^+) \cup (a^- + \omega^-) = (a^-+\omega^+) \cup (a^+ + \omega^-),
\eeq
where we write the lower case letter for the set of logarithms of a set written with the corresponding capital letter,
e.g.\ $a^+ = \log A^+$.
Identifying, further, the sets $\omega^\pm$ with their indicator functions
and the discrete sets $a^{\pm}$ with a collection of unit point masses at their points (for instance, we write $a^+$ instead of $\delta_{a^+}$),
we may rewrite the above tilings using convolution as
\beql{tilings}
1 = a^+*\omega^+(x) + a^-*\omega^-(x) = a^-*\omega^+(x) + a^+*\omega^-(x),\ \ \ \mbox{for almost all $x \in \RR$}.
\eeq
Adding and subtracting the two identities in \eqref{tilings} we get the equivalent set of identities (valid for almost all $x \in \RR$)
\beql{sum}
2 = (\omega^++\omega^-)*(a^++a^-)(x)
\eeq
and
\beql{diff}
0 = (\omega^+ - \omega^-)*(a^+-a^-)(x).
\eeq
Notice that $\omega^+(x)+\omega^-(x)$ is a function that only takes the values 0, 1 and, possibly, 2 and that $a^++a^-$ is a measure,
which is a collection of point masses of weight 1 or 2.
Similarly $a^+-a^-$ is a measure which is a collection of point masses of weight $\pm 1$.

There is obviously no problem with the definition of the convolutions in \eqref{tilings} and \eqref{sum}, as there are
only nonnegative quantities involved. A moment's thought should convince us that there is no problem in \eqref{diff} either,
as all sums involved have finitely many terms,
the functions $\omega^\pm(x)$ being of compact support and the sets $a^\pm$ being discrete.

{\bf Periodicity.}
From Theorem \ref{th:integer-tiling}\eqref{en:periodic} applied to the tiling \eqref{sum} we obtain that
$a$ (viewed as a multiset when the point mass at a point has weight 2)
is a periodic set
\beql{periodic-set}
a = a^+ \cup a^- =  \gamma\ZZ + \Set{\beta_1, \beta_2, \ldots, \beta_J},
\eeq
for some $\gamma>0$, $\beta_j \in \RR$.
As a consequence the set
$$
a^+ \cap a^-
$$
is also periodic, as this is where the multiplicity of $a$ is equal to 2.

Assume now that $\omega^+$ is not identical to $\omega^-$ (that is $\Omega$ is not symmetric with respect to 0)
so that the function $\omega^+-\omega^-$ that appears in \eqref{diff} is not equal to zero almost everywhere.
The measure $a^+-a^-$ is a collection of Dirac point masses of weight $\pm 1$. The weight 1 appears exactly on
the points of the set $a^+ \setminus a^-$ and the weight -1 exactly on the set $a^- \setminus a^+$.
Given that we have already established the periodicty of $a^+ \cap a^-$
the periodicity of the set $a^+$ and the periodicity of the set $a^-$ will follow if we manage to show
the periodicity of $a^+ \setminus a^-$ and of $a^- \setminus a^+$ with a period commensurable to a period of $a^+ \cap a^-$.

It follows again from Theorem \ref{th:integer-tiling}\eqref{en:periodic}, applied to tiling \eqref{diff},
with $f = \omega^+-\omega^-$ (this is a compactly supported function, because of our assumption
that the bounded set $\Omega$ avoids an open neighborhood of $0$),
that the sets $a^+\setminus a^-$ and $a^-\setminus a^+$ are periodic with the same period.
Since we have already shown that $a^+ \cap a^-$ is also periodic it follows that each of the sets $a^+$ and $a^-$
is a union of two periodic sets, and these must be of commensurable periods, as, otherwise, the set
$a^+ \cup a^-$, which is already known to be periodic, would contain two arithmetic progressions with inocommensurable step,
an impossibility.
It follows that $a^+$ and $a^-$ are periodic too, and with commensurable periods.

{\bf Rationality.}
Dilating space we may assume now that the sets $a^\pm$ have period 1.
It remains to prove that the set $a^+ \cup a^-$ has rational differences.
Write
\beql{one-period-plus}
a^+ + a^- = \delta_\ZZ*\sigma,\ \ \ \mbox{with}\ \ \ \sigma = \sum_{j=1}^J n_j \delta_{x_j},
\eeq
where $n_j \in \Set{1,2}$ and $x_j \in [0,1)$, for $j=1, 2, \ldots, J$.
It follows that we can also write
\beql{one-period-minus}
a^+-a^- = \delta_\ZZ*\tau,\ \ \ \mbox{with}\ \ \ \tau = \sum_{j=1}^J m_j \delta_{x_j},
\eeq
where now $m_j \in \Set{-1, 0, 1}$, for $j=1, 2, \ldots, J$.

Contrary to what we want to prove, assume that not all the $x_j$s are rationally equivalent, and
let $x_{i_1}, x_{i_2}, \ldots, x_{i_r}$ be a rationally equivalent class of the points $x_1, x_2, \ldots, x_J$ in \eqref{one-period-plus}.
We now apply Theorem \ref{th:integer-tiling}\eqref{en:split} to the two tilings \eqref{sum} and \eqref{diff}.
It follows that we have the two tilings
\beql{sum-1}
(\omega^++\omega^-)*(\delta_\ZZ+\sum_{j=1}^r n_{i_j} \delta_{x_{i_j}}) = k_1
\eeq
and
\beql{diff-1}
(\omega^+-\omega^-)*(\delta_\ZZ+\sum_{j=1}^r m_{i_j} \delta_{x_{i_j}}) = k_2,
\eeq
for some two integers $k_1, k_2$.

Write $a^{'\pm}$ for the restriction of $a^\pm$ to the set $\ZZ+\Set{x_{i_1}, \ldots, x_{i_r}}$
and observe that we now have
\beql{sum-2}
a^{'+}+a^{'-} = \delta_\ZZ + \sum_{j=1}^r n_{i_j} \delta_{x_{i_j}}
\eeq
and
\beql{diff-2}
a^{'+}-a^{'-} = \delta_\ZZ + \sum_{j=1}^r m_{i_j} \delta_{x_{i_j}},
\eeq
so that the tilings \eqref{sum-1} and \eqref{diff-1} can now be written as
\beql{sum-3}
(\omega^++\omega^-)*(a^{'+}+a^{'-}) = k_1
\eeq
and
\beql{diff-3}
(\omega^+-\omega^-)*(a^{'+}-a^{'-}) = k_2.
\eeq
By our assumption that not all points in $\Set{x_1, x_2 \ldots, x_J}$ are rationally equivalent it
follows that \eqref{sum-3} is a proper subtiling of \eqref{sum}, which forces $k_1=1$.
Adding and subtracting \eqref{sum-3} and \eqref{diff-3} and dividing by 2 we obtain the tilings
\beql{tilings-3}
\frac{1+k_2}{2} = a^{'+}*\omega^+(x) + a^{'-}*\omega^-(x),\ \ \ \frac{1-k_2}{2} = a^{'-}*\omega^+(x) + a^{'+}*\omega^-(x),
\eeq
analogous to \eqref{tilings} and proper subtilings of those in \eqref{tilings}.
But the tilings in \eqref{tilings} are minimal, as they are tilings by translates of sets, at level 1, a contradiction.

In case \eqref{symmetric-case} of the Theorem,
if $\omega^+ \equiv \omega^-$ and $\omega^+$ tiles $\RR$ with some set $a$, then \eqref{diff} is trivial and \eqref{sum} is clearly valid.
From \eqref{sum} it follows using the same tiling structure theorems that $a$ is a periodic multiset,
and \eqref{tilings} can be satisfied by arbitrarily breaking up the multiset $a$ into two sets (not multisets)
$a^+$ and $a^-$. Nothing more can be said about the sets $a^+$ and $a^-$ in this case.

\end{proof}

%%%%%%%%%%%%%%%%%%%%%%%%%%%%%%%%%%%%%%%%%%%%%%%%%%%%%%%%%%%%%%%%%%%%%%%%%%%%%%%%%
%%%%%%%%%%%%%%%%%%%%%%%%%%%%%%%%%%%%%%%%%%%%%%%%%%%%%%%%%%%%%%%%%%%%%%%%%%%%%%%%%
%%%%%%%%%%%%%%%%%%%%%%%%%%%%%%%%%%%%%%%%%%%%%%%%%%%%%%%%%%%%%%%%%%%%%%%%%%%%%%%%%
\section{The structure of a multiplicative tile}
\label{sec:tile-structure}

\subsection{Symmetric tile}\label{sec:symmetric-tile}
Suppose that the set $\Omega$ is symmetric with respect to $0$. In the notation of Theorem \ref{th:mult-structure}
this means that $\Omega^+ = \Omega^-$ (up to measure 0).
In this case the tilings of \eqref{log-tilings} become just one tiling:
\beql{single-tiling}
\RR = (a^+ \cup a^-) + \omega^+.
\eeq
(Remember that the lower case letters denote the logarithms of the sets written in the corresponding upper case.)
So in this case the problem of multiplicative tiling becomes exactly the problem of translational tiling of the real line
by the tile $\omega^+$ and with set of translates the multiset $a^+ \cup a^-$.
The structure of $\omega^+$ in this case has been completely characterized in \cites[Theorem 3]{lagarias1996tiling}.

\subsection{Non-symmetric tile}\label{sec:non-symmetric-tile}
Suppose now that we are in the case where $\omega^+ \nequiv \omega^-$ and, according to Theorem \ref{th:mult-structure},
$a^+, a^-$ are both periodic with the same period obeying \eqref{periodic-rational}.
We can thus write (after scaling)
\beql{a-shape}
a^+ = \ZZ+\frac{1}{L}\Set{0=\alpha_1^+, \alpha_2^+, \ldots , \alpha_m^+},\ \ \ 
a^- = \ZZ+\frac{1}{L}\Set{\alpha_1^-, \alpha_2^-, \ldots , \alpha_n^+},
\eeq
for some positive integer $L$, where $\alpha_j^\pm \in \ZZ_L = \ZZ / (L\ZZ)$.

In order to express our tiling problem on the torus $\TT = \RR / \ZZ$ we identify
$\omega^\pm$ with the indicator function of the set that arises when taking their
projection mod 1.
(Because of the tiling assumption the points of each $\omega^\pm$ are different mod 1.)

Write
$$
\alpha^+ = \Set{\alpha_1^+, \alpha_2^+, \ldots, \alpha_m^+},\ \ \ \alpha^- = \Set{\alpha_1^-, \alpha_2^-, \ldots, \alpha_n^-},
$$
and also $\alpha^\pm$ for the collection of unit Dirac masses at the points of $\alpha^\pm$.

The tiling conditions \eqref{tilings} now become equivalently the tilings of the torus
\beql{mod-tilings}
1 = \frac{\alpha^+}{L}*\omega^+(t) + \frac{\alpha^-}{L}*\omega^-(t) = \frac{\alpha^-}{L}*\omega^+(t) + \frac{\alpha^+}{L}*\omega^-(t),\ \ \ \text{for $t \in \TT$.}
\eeq
Write for $x \in [0,1)$
$$
C_x = \Set{x+\frac{k}{L} \bmod 1:\ k=0,1,\ldots,L-1} \subseteq \TT.
$$
Multiplying \eqref{mod-tilings} by $\one_{C_x}(t)$ we get
$$
\one_{C_x}(t) =
 \frac{\alpha^+}{L}*\one_{C_x}\omega^+(t) + \frac{\alpha^-}{L}*\one_{C_x}\omega^-(t) =
 \frac{\alpha^-}{L}*\one_{C_x}\omega^+(t) + \frac{\alpha^+}{L}*\one_{C_x}\omega^-(t),\ \ \ \text{for $t \in \TT$.}
$$
Restricting to $t \in C_x$ we can rewrite this as the two tiling conditions on $\ZZ_L$, valid for all $x \in [0, \frac{1}{L})$,
\begin{align}
\ZZ_L &= (\omega^+\cap C_x) + \alpha^+ \ \ \cup\ \  (\omega^-\cap C_x) + \alpha^-,\nonumber\\
\ZZ_L &= (\omega^+\cap C_x) + \alpha^- \ \ \cup\ \  (\omega^-\cap C_x) + \alpha^+\label{cycle-tilings},
\end{align}
where we are now identifying the sets $\omega^\pm\cap C_x$ with the obvious subset of $\ZZ_L$.

The sets $\omega^\pm \cap C_x$ can be chosen independently for all $x \in [0, 1/L)$ as tiling is not affected by
what happens on different cosets of $\frac{1}{L}\ZZ$.
We conclude that the sets $\omega^\pm$ are of the form
\begin{align}
\omega^+ &= \bigcup_{x \in [0,1/L)} \left(x + \frac{1}{L} b^+_x \right)\nonumber\\
\omega^- &= \bigcup_{x \in [0,1/L)} \left(x + \frac{1}{L} b^-_x \right) \ \ \ \mbox{(disjoint unions)} \label{decomposition}
\end{align}
where $b^\pm_x \subseteq \ZZ_L$ are such that for each $x$ we have the two tilings
\begin{align}
\ZZ_L &= b^+_x + \alpha^+ \cup b^-_x + \alpha^-\nonumber\\
\ZZ_L &= b^+_x + \alpha^- \cup b^-_x + \alpha^+\label{cycle-tilings-final}
\end{align}
In the next section we are trying to understand better the kind of tiling described in \eqref{cycle-tilings-final}.

%%%%%%%%%%%%%%%%%%%%%%%%%%%%%%%%%%%%%%%%%%%%%%%%%%%%%%%%%%%%%%%%%%%%%%%%%%%%%%%%%
%%%%%%%%%%%%%%%%%%%%%%%%%%%%%%%%%%%%%%%%%%%%%%%%%%%%%%%%%%%%%%%%%%%%%%%%%%%%%%%%%
%%%%%%%%%%%%%%%%%%%%%%%%%%%%%%%%%%%%%%%%%%%%%%%%%%%%%%%%%%%%%%%%%%%%%%%%%%%%%%%%%
\section{Cross tiling}
\label{sec:ct}

\begin{definition}[Cross tiling]\label{def:cross-tiling}\ \\
Suppose $N>1$ is a positive integer and $A, B, X, Y \subseteq \ZZ_N$. We say that the pair $A, B$ admits
{\em cross tiling} with complements $X, Y$ if the following tilings hold:
\begin{align}
\ZZ_N = (A + X) \cup (B + Y)\nonumber\\
\ZZ_N = (A + Y) \cup (B + X) \label{cross-tiling}
\end{align}
A cross tiling \eqref{cross-tiling} is called {\em trivial} if
$(A \cup B) + X = \ZZ_N$ is a tiling and $X=Y$, or the same with the roles of
$A, B$ exchanged with those of $X, Y$.
(It is obvious that in this case conditions \eqref{cross-tiling} are indeed satisfied.)
\end{definition}

\noindent{\bf Remark.}
It is interesting to observe that cross tiling is really an ordinary tiling by translation, although of a larger group.
With $A, B, X, Y \subseteq \ZZ_N$ as in Definition \ref{def:cross-tiling} above write
\begin{align*}
\Gamma &= \ZZ_N \times \ZZ_2,\\
C &= A\times\Set{0} \cup B\times\Set{1},\\
Z &= X\times\Set{0} \cup Y\times\Set{1}.
\end{align*}
It is easy to see that the cross tiling condition \eqref{cross-tiling} is equivalent to the tiling by translation
$$
\Gamma = C + Z.
$$
This alternative characterization of cross tiling may be of interest but is not exploited in this paper

The following two examples are non-trivial cross tilings.
\begin{example}\label{ex:first-non-trivial}
Let $N=2ab$, with odd $a, b \in \NN$ and view $G=\ZZ_N$ as the cross-product $\ZZ_N = \ZZ_{ab}\times\ZZ_2$.
Define
\begin{align*}
A &= \Set{0, 1, 2, \ldots, a-1} \times \Set{0},\\
B &= \Set{0, 1} \cup \Set{a+2, a+3,  \ldots, 2a-1} \times \Set{0},\\
X &= \Set{0, a, 2a, \ldots, (b-1)a} \times \Set{0},\\
Y &= \Set{0, a, 2a, \ldots, (b-1)a} \times \Set{1}.
\end{align*}
It follows that
$$
A + X = B + X = \ZZ_{ab}\times\Set{0}\ \ \ \text{and}\ \ \ B + Y = A + Y =  \ZZ_{ab}\times\Set{1}
$$
so \eqref{cross-tiling} follows. See Figure \ref{fig:easy-example}.

\end{example}

\begin{figure}[h]
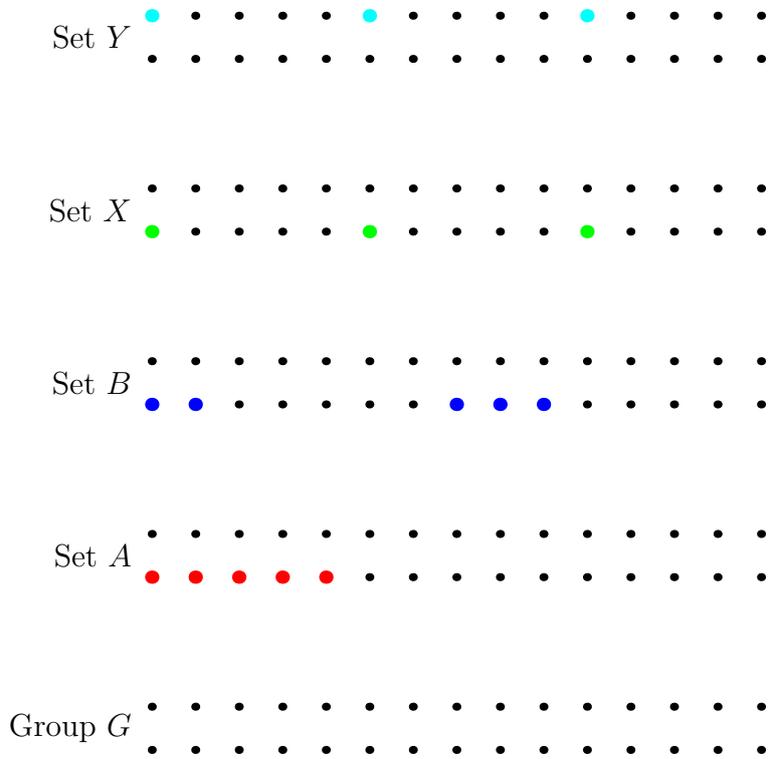

\begin{center}
\begin{asy}
size(10cm,0);
 
int i, j, a=5, b=3, N=2*a*b;
 
pair A[]; for(i=0; i<a; ++i) { A[i] = (i,0); }

pair B[]; B[0] = (0,0); B[1] = (1,0); for(j=2, i=a+2; i<2*a; ++i, ++j) { B[j] = (i,0); }

pair X[]; for(i=0; i<b; ++i) { X[i] = (i*a, 0); }

pair Y[]; for(i=0; i<b; ++i) { Y[i] = (i*a, 1); }
 
pair u=(0,4);

// Draw group
picture pg;
for(i=0; i<a*b; ++i) {
 for(j=0; j<2; ++j) {
  draw(pg, (i,j), 3bp+black);
 }
}
add(pg);
label("Group $G$ ", (0,.5), W);

// Sets
picture pa;
for(i=0; i<A.length; ++i) {
 draw(pa, A[i], 5bp+red);
}
add(shift(u)*pg);
add(shift(u)*pa);
label("Set $A$ ", u+(0,.5), W);

picture pb;
for(i=0; i<B.length; ++i) {
 draw(pb, B[i], 5bp+blue);
}
add(shift(2*u)*pg);
add(shift(2*u)*pb);
label("Set $B$ ", 2*u+(0,.5), W);

picture px;
for(i=0; i<X.length; ++i) {
 draw(px, X[i], 5bp+green);
}
add(shift(3*u)*pg);
add(shift(3*u)*px);
label("Set $X$ ", 3*u+(0,.5), W);

picture py;
for(i=0; i<Y.length; ++i) {
 draw(py, Y[i], 5bp+cyan);
}
add(shift(4*u)*pg);
add(shift(4*u)*py);
label("Set $Y$ ", 4*u+(0,.5), W);

\end{asy}
\end{center}
\caption{The sets cross tiling in Example \ref{ex:first-non-trivial} with $a=5, b=3$} \label{fig:easy-example}
\end{figure}

\begin{figure}[h]
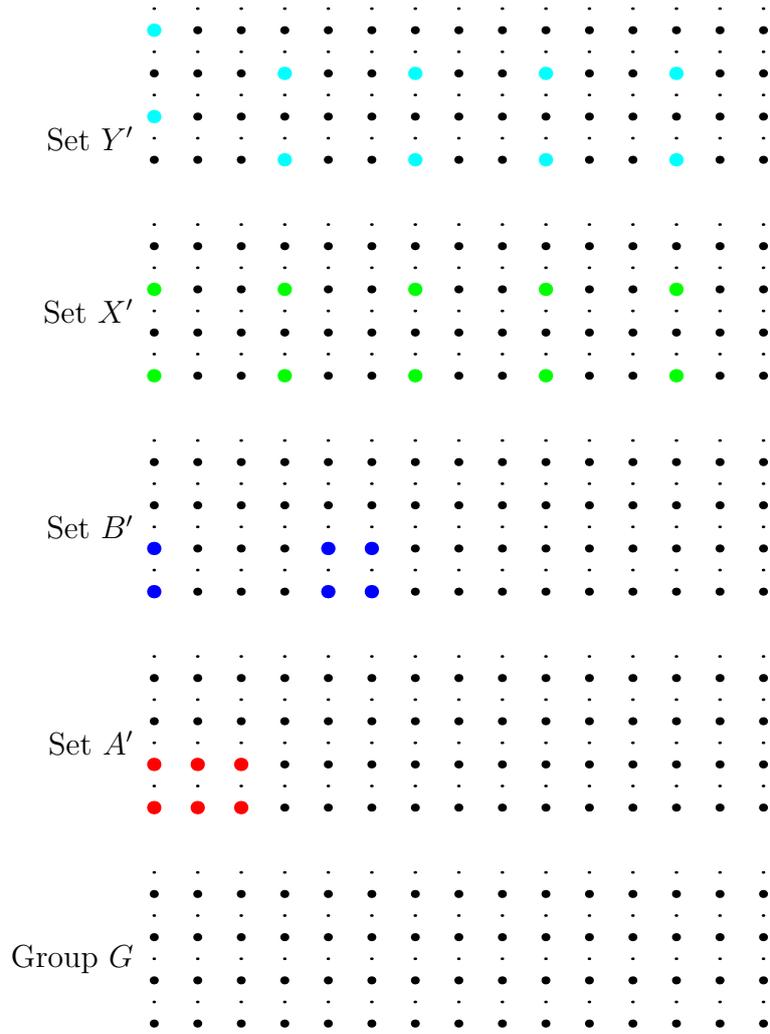

\begin{center}
\begin{asy}
size(10cm,0);

int i, j, a=15, b=4;

real F1[]={0,1,2}, F2[]={0,4,5}, C[]={0, 3, 6, 9, 12}, CC[]={3, 6, 9, 12};

pair A[]; for(i=0; i<F1.length; ++i) {A.push((F1[i], 0)); A.push((F1[i], 1)); }

pair B[]; for(i=0; i<F2.length; ++i) {B.push((F2[i], 0)); B.push((F2[i], 1)); }

pair X[]; for(i=0; i<C.length; ++i) {X.push((C[i], 0)); X.push((C[i], 2)); }

pair Y[]; Y.push((0,1)); Y.push((0,3)); for(i=0; i<CC.length; ++i) {Y.push((CC[i], 0)); Y.push((CC[i], 2)); }

pair u=(0,5);

// Draw group
picture pg;
for(i=0; i<a; ++i) {
for(j=0; j<b; ++j) {
draw(pg, (i,j), 3bp+black);
}
}
for(i=0; i<a; ++i) {
for(j=0; j<b; ++j) {
//draw(pg, (i+0.5,j), 1bp+black);
//draw(pg, (i+0.5,j+0.5), 1bp+black);
draw(pg, (i,j+0.5), 1bp+black);
}
}
add(pg);
label("Group $G$ ", (0,1.5), W);

// Sets
picture pa;
for(i=0; i<A.length; ++i) {
draw(pa, A[i], 5bp+red);
}
add(shift(u)*pg);
add(shift(u)*pa);
label("Set $A'$ ", u+(0,1.5), W);

picture pb;
for(i=0; i<B.length; ++i) {
draw(pb, B[i], 5bp+blue);
}
add(shift(2*u)*pg);
add(shift(2*u)*pb);
label("Set $B'$ ", 2*u+(0,1.5), W);

picture px;
for(i=0; i<X.length; ++i) {
draw(px, X[i], 5bp+green);
}
add(shift(3*u)*pg);
add(shift(3*u)*px);
label("Set $X'$ ", 3*u+(0,1.5), W);

picture py;
for(i=0; i<Y.length; ++i) {
draw(py, Y[i], 5bp+cyan);
}
add(shift(4*u)*pg);
add(shift(4*u)*py);
label("Set $Y'$ ", 4*u+(0,.5), W);

\end{asy}
\end{center}
\caption{The sets $A', B', X', Y' \subseteq H$ in Example \ref{ex:second-non-trivial} giving rise to the sets $A, B, X, Y \subseteq G$} \label{fig:better-example}
\end{figure}

\begin{example}\label{ex:second-non-trivial}
In Example \ref{ex:first-non-trivial} the sets $X$ and $Y$ are translates of each other, which is perhaps not very
satisfactory in terms of deviating from triviality.  
Now we give an example where this does not happen.

We work in the group $G = \ZZ_{15}\times\ZZ_8 = \ZZ_{120}$
which contains the subgroup
$$
H = \ZZ_{15} \times \Set{0, 2, 4, 6}.
$$

Next define the subsets of $\ZZ_{15}$
$$
F_1 = \Set{0, 1, 2},\ \ \ F_2 = \Set{0, 4, 5}
$$
and notice that they both tile $\ZZ_{15}$ with the complement $\Set{0, 3, 6, 9, 12}$.

Define the subsets of $H$
\begin{align*}
A' &= F_1 \times \Set{0, 2},\\
B' &= F_2 \times \Set{0, 2},\\
X' &= \Set{0, 3, 6, 9, 12} \times \Set{0, 4},\\
Y' &= (\Set{0}\times\Set{1, 3}) \cup (\Set{3, 6, 9, 12}) \times \Set{0, 4}.
\end{align*}
See Figure \ref{fig:better-example} where the group $H$ is shown as thick dots
while its single coset in $G$ is shown as thin dots.
The difference between $X'$ and $Y'$ is that the first ``column'' of $Y'$ is ``raised'' by 1.

It is easy to verify that each of $A', B'$ tiles $H$ with each of $X', Y'$ as a tiling complement.

Finally define the subsets of $G$
\begin{align*}
A &= A', \\
B &= B', \\
X &= X', \\
Y &= Y'+(0,1),
\end{align*}
and observe that they do satisfy the cross-tiling conditions
\begin{align}
G &= (A+X) \cup (B+Y) \nonumber \\
G &= (A+Y) \cup (B+X). \label{ct}
\end{align}
The sets in parentheses in \eqref{ct} are tilings of each of the two $H$-cosets in $G$.
The sets with $+X$ tile $H$ and those with $+Y$ tile $H+(0,1)$.

None of the sets $A, B, X, Y$ is a translate of another.
\end{example}

%%%%%%%%%%%%%%%%%%%%%%%%%%%%%%%%%%%%%%%%%%%%%%%%%%%%%%%%%%%%%%%%%%%%%%%%%%%%%%%%%
%%%%%%%%%%%%%%%%%%%%%%%%%%%%%%%%%%%%%%%%%%%%%%%%%%%%%%%%%%%%%%%%%%%%%%%%%%%%%%%%%
%%%%%%%%%%%%%%%%%%%%%%%%%%%%%%%%%%%%%%%%%%%%%%%%%%%%%%%%%%%%%%%%%%%%%%%%%%%%%%%%%
\subsection{Fourier condition for cross tiling}

Translational tiling $A + X = \ZZ_N$ has a simple equivalent Fourier condition (we use the same letter for a set and its indicator function):
\beql{tiling-ft}
\ft{A}(0) \ft{X}(0) = N,\ \ \ \mbox{and}\ \ \ \forall k \in \ZZ_N\setminus\Set{0}: \ft{A}(k) \neq 0 \Longrightarrow \ft{X}(k) = 0.
\eeq
Adding and subtracting the cross-tiling defining conditions
$$
A*X+B*Y \equiv 1,\ \ \ A*Y+B*X \equiv 1,
$$
we obtain the equivalent conditions
\beql{cross-tiling-equiv}
(A+B)*(X+Y) \equiv 2,\ \ \ (A-B)*(X-Y) \equiv 0.
\eeq
Taking Fourier Transforms these conditions lead to the following equivalent Fourier condition for cross-tiling:
\begin{align}
&(\ft{A}(0) + \ft{B}(0)) (\ft{X}(0)+\ft{Y}(0)) = 2N,\nonumber\\
\forall k \in \ZZ_N\setminus\Set{0}:\ \ \ &\ft{A}(k) \neq -\ft{B}(k) \Longrightarrow \ft{X}(k) = -\ft{Y}(k). \label{cross-tiling-ft}\\
\forall k \in \ZZ_N:\ \ \ &\ft{A}(k) \neq \ft{B}(k) \Longrightarrow \ft{X}(k) = \ft{Y}(k).\nonumber
\end{align}
Using $k=0$ in the last set of equations we obtain that necessarily
$$
\Abs{A} = \Abs{B}\ \ \ \text{or}\ \ \ \Abs{X} = \Abs{Y}.
$$

\printbibliography

\end{document}